

\documentclass[11pt]{article} 

\usepackage[utf8]{inputenc} 

\usepackage{geometry} 
\geometry{a4paper} 


\usepackage{graphicx} 
\usepackage{amsmath} 
\usepackage{amsfonts} 
\usepackage{amsthm} 
\usepackage{amssymb} 
\usepackage{enumitem}

\usepackage{algorithm}
\usepackage[noend]{algpseudocode}
\usepackage{authblk}
\usepackage{eucal}
\usepackage{url}
\usepackage{csvsimple}
\usepackage{sectsty}
\usepackage{verbatim}
\usepackage{longtable}



\sectionfont{\fontsize{12}{15}\selectfont}

\newtheorem{thm}{Theorem}[section]
\newtheorem{lem}[thm]{Lemma}

\newtheorem{cor}[thm]{Corollary}

\newtheorem{prop}[thm]{Proposition}



\title{On the Asymptotic Order of Circuit Codes}
\author{Kevin M. Byrnes and Florin Spinu
\thanks{E-mail:\texttt{dr.kevin.byrnes@gmail.com, fspinu@gmail.com}}
}

\begin{document}
\maketitle

\begin{abstract}
In this note we prove that the maximum length
of a $d$-dimensional circuit code of spread $k$ equals
$2^{d+O_k(\log^2d)}$, with the implied constant depending only on $k$.
\end{abstract}

\section{Introduction}
Let $I(d)$ denote the graph of the $d$-dimensional hypercube. 
For $k\geq 1$ an integer, a circuit code of \emph{spread} $k$, 
or a  $(d,k)$ circuit code, 
is a simple cycle $C$ in $I(d)$ satisfying the distance property
 $d_{I(d)}(x,y) \ge \min \{d_C(x,y),k\},\forall x,y\in C$.
Let $K(d,k)$ denote the maximum length of such a cycle.
In this paper we study its asymptotic order
\begin{equation}
\label{nu_def}
\nu(k):=\lim_{d\to\infty}\frac{\log_2 K(d,k)}{d}\ .
\end{equation}
For small $k$, existing results \cite{AK}:
$K(d,1)=2^d$ and $K(d,2)\ge \frac{3}{10}2^d$ imply $\nu(1)=\nu(2)=1$. 
For $k\ge 3$, it is not a priori clear that the limit exists or whether it is bounded away from zero. 
General estimates (\cite{Grunbaum} Chapter $17$) imply
the weaker bound $\nu(k) \ge 2/k$ or $2/(k+1)$ (depending on $k$ being even or odd); 
on the other hand, special constructions in certain dimensions $d=2^n+O(n)$ 
lead to  especially long codes.  
In this note we prove 
\begin{thm}
\label{thm1}
For any $k\ge 1$, 
$\log_2 K(d,k)=d+O(\log^2d)$ as $d\to\infty$, 
with the implied constant depending on $k$ only.  
In particular, $\nu(k)=1$.
\end{thm}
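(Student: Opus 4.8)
The upper bound is immediate: a simple cycle in $I(d)$ visits at most $|V(I(d))|=2^d$ vertices, so $K(d,k)\le 2^d$ and $\log_2 K(d,k)\le d$ for all $d,k$. Hence the entire content of Theorem~\ref{thm1} is the matching lower bound: for each fixed $k$ one must produce a $(d,k)$ circuit code of length at least $2^{\,d-O_k(\log^2 d)}$. Granting this, $\log_2 K(d,k)-d$ lies in $[-O_k(\log^2 d),0]$, so $\log_2 K(d,k)=d+O_k(\log^2 d)$, and dividing by $d$ and letting $d\to\infty$ gives $\nu(k)=1$.

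I would prove the lower bound by a recursive \emph{doubling} construction. The elementary move is the hypercube analogue of a reflected Gray-code step: given a $(d,k)$ circuit code $C$ of length $N$, presented by its cyclic transition sequence $T=(t_1,\dots,t_N)$, cut $C$ at some position into an open path $P$ on $N$ vertices, lay $P$ inside the subcube $\{x_{d+1}=0\}$ of $I(d+1)$, flip coordinate $d+1$, retrace the reversed path inside $\{x_{d+1}=1\}$, and flip coordinate $d+1$ once more; this yields a cycle of length $2N$ in $I(d+1)$. In isolation this would double the length for a single extra coordinate and preserve the ``deficiency'' $d-\log_2 N$. The obstruction — and the source of the $\log^2 d$ rather than $O_k(1)$ error — is that spread $k$ can fail near the \emph{seam}: the two endpoints of $P$ are adjacent in $I(d)$ (they were consecutive on $C$), so on the doubled cycle they sit at large cyclic distance but hypercube distance $1<k$; more generally every pair lying within distance $<k$ of the cut on $C$ is dangerous. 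To neutralise them one must cut $C$ inside a long block of transitions on \emph{distinct, freshly introduced} coordinates, which separates everything before the cut from everything after it by a large Hamming distance; keeping such a clean cut available is what costs extra dimensions. Concretely, my plan is to maintain as an invariant a circuit code together with a reserve of unused coordinates concentrated around a marked cut point: each doubling is then cheap ($+1$ dimension, length doubled, deficiency unchanged) and nibbles at the reserve, and once per ``scale'' — each time the dimension has roughly doubled — one performs a \emph{reconditioning} step that rebuilds the reserve at the cost of $O_k(\log d)$ new dimensions. Reaching dimension $d$ from a fixed base dimension $d_0(k)$ takes $O(\log d)$ scales, each adding $O_k(\log d)$ to the deficiency, for a total of $O_k(\log^2 d)$; the base case is any fixed $(d_0(k),k)$ circuit code of length $\ge 2$, and the unusually long codes known in dimensions $2^n+O(n)$ sit at the ends of the long runs of cheap doublings in this scheme.

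The main obstacle is the seam analysis: showing that after each reflection, and after each reconditioning step, the inequality $d_{I(d+1)}(x,y)\ge\min\{d_{C'}(x,y),k\}$ holds for \emph{all} pairs $x,y$ on the new cycle $C'$, not merely those far from the cut. This needs a careful prescription for where and how to cut $C$, together with a quantitative lemma of the form ``a run of $\ell$ transitions on pairwise distinct fresh coordinates forces Hamming distance $\ge\ell$ between any vertex before the run and any vertex after it'', calibrated so that a reserve of $O_k(\log d)$ fresh coordinates suffices to protect a full scale's worth of doublings. A secondary obstacle is the bookkeeping: verifying that the reconditioning steps are few enough and cheap enough that their contributions sum to $O_k(\log^2 d)$ with an implied constant depending on $k$ alone, and that reconditioning genuinely restores the ``code plus reserve'' invariant so the recursion can continue indefinitely. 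With these in hand, $\log_2 K(d,k)\ge d-O_k(\log^2 d)$, which together with the trivial upper bound proves the theorem.
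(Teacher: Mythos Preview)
Your reflection step has a gap that the reserve idea cannot repair. When you reflect the path $P=(x_1,\dots,x_N)$ through a single fresh coordinate $d{+}1$, the doubled cycle is $x_1,\dots,x_N,x_N',\dots,x_1'$ with $x_i'=x_i+e_{d+1}$. For \emph{every} index $i$ the mirror pair $(x_i,x_i')$ has Hamming distance exactly~$1$, while its cyclic distance on the new cycle is $\min(2i-1,\,2(N-i)+1)$, which is $\ge 3$ for all $2\le i\le N-1$. Thus spread $k\ge 2$ fails not only near the seam but throughout the interior of the two copies; for $i\approx N/2$ the violation is as large as possible. Your diagnosis (``every pair lying within distance $<k$ of the cut on $C$ is dangerous'') correctly catches the pairs $(x_i,x_j)$ that were close across the deleted edge of $C$, and a block of fresh coordinates at the cut does separate \emph{those}; but it does nothing for the mirror pairs $(x_i,x_i')$, whose Hamming distance is pinned at $1$ regardless of where you cut or what you place nearby. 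The only way to push every mirror pair to Hamming distance $\ge k$ is to flip $k$ new coordinates at the seam rather than one; but then each ``doubling'' costs $+k$ dimensions, so $m$ steps reach dimension $d_0+mk$ with length $\approx 2^m N_0$, yielding only $\log_2 K(d,k)\gtrsim d/k$ --- the classical weak bound $\nu(k)\ge 1/k$, not the theorem.

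The paper takes a different route. It uses a product-type inequality of Klee (in the form of Corollary~\ref{cor1}) to show that $a_n:=\log_2(K(n-3,k)/k)$ is superadditive for even $k$; it then feeds in the Preparata--Nievergelt codes (built from BCH codes, not by reflection), which give $a_{d_i}=d_i-O(\log d_i)$ along the sparse sequence $d_i=2^i+O_k(i)$; a binary-type decomposition $d=\sum_i\epsilon_i d_i+O(1)$ with $\epsilon_i\in\{0,1\}$ together with superadditivity then gives $a_d\ge\sum_i\epsilon_i a_{d_i}=d-O(\log^2 d)$, and odd $k$ follows from $K(d,k)\ge K(d,k+1)$. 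The essential external ingredient you are missing is a combining construction in which lengths \emph{multiply} while dimensions merely \emph{add}; single-coordinate reflection is exactly such a construction for $k=1$ (Gray codes), but for $k\ge 2$ no $+1$-dimension doubling of this kind is available.
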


\section{Previous Results and Technical Lemmas}
Our notation follows \cite{Byrnes2019}, throughout this note we treat vertices of $I(d)$ and their corresponding binary vectors of length $d$ interchangeably.
A $(d,k)$ circuit code $C=(x_1,\ldots,x_N)$ starting at $x_1=\vec{0}$ is uniquely determined by its
\emph{transition sequence} $T(C)=(\tau_1,\ldots,\tau_N)$, 
where $\tau_i$ is the single position in which $x_i$ and $x_{i+1}$  (where $x_{N+1}=x_1$) differ.
Let $\omega$ denote a \emph{segment} (cyclically consecutive subsequence) of $T(C)$, and let $K(d,k,s)$ denote
the maximum length of a $(d,k)$ circuit code whose length is also divisible by $s$.

\begin{lem}
\label{lem_lb}
Let $k\ge 2$ and $d\ge (k+2)^2$, then $K(d,k)>2(d-1)k$.
\end{lem}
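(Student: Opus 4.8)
The plan is to exhibit, for each $k\ge 2$ and each $d\ge(k+2)^2$, an explicit $(d,k)$ circuit code of length greater than $2(d-1)k$ by writing down its transition sequence and checking validity via the usual combinatorial criterion. That criterion states: a cyclic word $T=(\tau_1,\ldots,\tau_N)$ over the alphabet $\{1,\ldots,d\}$ is the transition sequence of a $(d,k)$ circuit code if and only if (i) every coordinate occurs an even number of times in $T$, and (ii) for every segment $\omega$ of $T$ the number of coordinates occurring an odd number of times within $\omega$ is at least $\min\{|\omega|,\,N-|\omega|,\,k\}$. Indeed, if $x,y$ are the endpoints of $\omega$, then $d_{I(d)}(x,y)$ equals the number of odd-multiplicity coordinates of $\omega$ (equivalently, of its complement, using (i)), while $d_C(x,y)=\min\{|\omega|,N-|\omega|\}$; thus (i) makes the associated walk close up, and (ii) is the distance property, which in particular already forces that walk to be a simple cycle (a segment joining two coinciding vertices would have no odd-multiplicity coordinate, while its required value $\min\{|\omega|,N-|\omega|,k\}\ge 1$). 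So it suffices to write down such a $T$ with $N>2(d-1)k$.

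For the construction I would use a long, ``non-repetitive'' walk in $I(d)$. The transition sequence $T$ should have length $N\approx 2dk>2(d-1)k$, use each of the $d$ coordinates about $2k$ times with any two consecutive occurrences of a coordinate at distance $\gg k$, and --- this is the crucial demand --- be far enough from periodic that no segment of length $\le N/2$ is close to balanced. A natural candidate is a walk that keeps the current vertex near the middle weight level and winds about $k$ times around the cyclically ordered coordinates, given a small, deliberate perturbation so that corresponding points of two different windings lie at Hamming distance $\ge k$ (and so, in particular, so that the walk is a simple cycle, closing only after all $k$ windings). One could also simply quote such a construction from the literature, but I would prefer to keep the argument self-contained.

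With $T$ written down, the remaining task is to verify (i) and (ii). Condition (i), and the distinctness of every window of $\le k$ consecutive transitions, are immediate from the construction; the real work --- and the step I expect to be the main obstacle --- is (ii) for segments $\omega$ of medium length, $k<|\omega|\le N/2$, i.e. ruling out segments with fewer than $k$ odd-multiplicity coordinates. I would organize this by comparing $|\omega|$ with the length $\approx 2d$ of one winding: a segment shorter than a winding contains $\ge|\omega|-O(1)>k$ distinct coordinates and is harmless; a longer segment decomposes into a whole number of complete windings plus two partial ones, where the partial pieces already contribute many odd-multiplicity coordinates unless they are short, and in the remaining case $\omega$ is essentially a union of complete windings whose endpoints correspond to the ``same position'' several windings apart, so that their Hamming distance is governed by the accumulated perturbation. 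This is exactly the point at which the hypothesis $d\ge(k+2)^2$ is used: it gives enough room around the coordinate cycle to make the per-winding perturbation simultaneously large enough to keep distinct windings Hamming-distance $\ge k$ apart and small enough that all $k$ windings' worth of it still fits within a single revolution --- which is what closes the estimate. Turning this sketch into clean inequalities is the delicate part; everything else is routine bookkeeping.
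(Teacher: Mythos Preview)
Your plan diverges sharply from the paper's proof, which is a three-line citation of known exponential lower bounds: for $k=2$ it invokes $K(d,2)\ge\frac{3}{10}2^d$, and for $k\ge 3$ it invokes Singleton's bound $K(d,k)\ge(k+1)2^{\lfloor 2d/(k+1)\rfloor-1}$ (applied to $k$ or to $k+1$ according to parity, using $K(d,k)\ge K(d,k+1)$), each of which dwarfs the linear target $2(d-1)k$ once $d\ge(k+2)^2$. No new construction is carried out; the hypothesis $d\ge(k+2)^2$ is used only to make these cited exponential bounds comfortably exceed $2(d-1)k$.

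By contrast you set out to build, from scratch, a code of length barely above $2(d-1)k$ and verify the spread condition by hand. The combinatorial criterion you state for transition sequences is correct, and the ``$\approx k$ perturbed windings through the $d$ coordinates'' picture is morally what underlies Singleton-type constructions, so the route is not wrong in principle. But as written it is a plan, not a proof: you never actually specify $T$, and the step you yourself flag as delicate really is. For a segment comprising an even number of complete windings every coordinate has even multiplicity before perturbation, so the odd-multiplicity count is carried \emph{entirely} by the accumulated perturbation; arranging that this accumulated parity is $\ge k$ simultaneously for every starting position and every even window-count up to about $k$ is a genuine combinatorial design constraint that your sketch does not resolve, and the heuristic that $d\ge(k+2)^2$ ``gives enough room'' is not yet an argument. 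If self-containment is the goal, you must actually write down the perturbation and close these inequalities; otherwise the citation route the paper takes is both shorter and complete.
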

\begin{proof}
If $k=2$ the claim follows as $K(d,2)\ge \frac{3}{10}2^d$.
For $k\ge 3$, as $K(d,k)\ge K(d,k+1)$, the following lower bound (due to \cite{Singleton}) 
\begin{equation}
\label{singleton_lb_odd}
K(d,k)\ge (k+1) 2^{\lfloor 2d/(k+1) \rfloor -1}, \text{when $k$ odd and } \lfloor 2d/(k+1) \rfloor \ge  2
\end{equation}
can be applied to either $K(d,k)$ or $K(d,k+1)$, yielding the result.
\end{proof}

\begin{lem}
\label{lem_divis}
Let $k\ge 2$, $d\ge (k+2)^2$, and $s\in \{k,\ldots,d\}$.  Then $K(d+1,k,s)\ge K(d,k)$.
\end{lem}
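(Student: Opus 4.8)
The plan is to take an optimal $(d,k)$ circuit code $C$ of length $N=K(d,k)$, regard it inside $I(d+1)$ by appending a coordinate that is identically $0$, and then, when $s\nmid N$, lengthen it — using only the fresh coordinate $d+1$ — up to the nearest even multiple of $s$ without destroying the spread-$k$ property.

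First I would dispose of the trivial case: if $s\mid N$, then $C$ itself, viewed in $I(d+1)$, is a $(d+1,k)$ circuit code of length $N$ divisible by $s$, so $K(d+1,k,s)\ge N=K(d,k)$. So assume $s\nmid N$ and let $r$ be the least positive even integer with $s\mid N+r$. A short congruence count, using that $N$ is even, gives $r\le 2(s-1)\le 2(d-1)$, and hence, by Lemma~\ref{lem_lb}, $rk\le 2(d-1)k<K(d,k)=N$.

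Next, write $T(C)=(\tau_1,\dots,\tau_N)$. Since $N>rk$, I can pick $r$ insertion slots among the $N$ cyclic gaps of $T(C)$ so that the $r$ arcs into which they cut $T(C)$ each contain at least $k$ of the $\tau_i$. Inserting a transition in coordinate $d+1$ at each chosen slot produces a cyclic transition sequence $T'$ of length $N+r$, which I claim is the transition sequence of a $(d+1,k)$ circuit code $C'$. Geometrically $C'$ is obtained by \emph{lifting} alternate arcs of $C$ into the second copy of $I(d)$ inside $I(d+1)$; it is a simple cycle because the projection $C'\to C$ is at most two-to-one, with the only doubly covered vertices being the (mutually $\ge k-1$ apart, hence distinct) endpoints of $d+1$-transitions. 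Granting that $C'$ has spread $k$, its length $N+r$ is divisible by $s$ and is $\ge N=K(d,k)$, which is the assertion of the lemma.

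The main obstacle is exactly the spread verification for $C'$. Here I would argue as follows. For vertices $y,y'$ of $C'$ with projections $x,x'$ in $I(d)$ one has $d_{I(d+1)}(y,y')=d_{I(d)}(x,x')+\varepsilon$, where $\varepsilon\in\{0,1\}$ records whether the $(d+1)$-coordinates of $y$ and $y'$ differ, and $d_{C'}(y,y')\ge d_C(x,x')$ (a shortest $C'$-arc between $y$ and $y'$ projects to a walk of no greater length in $C$). Moreover, since consecutive $d+1$-transitions of $C'$ are at cyclic distance $\ge k$, any arc of $C'$ of length $<k$ crosses at most one of them. Splitting into the cases $d_{C'}(y,y')<k$ and $d_{C'}(y,y')\ge k$ and combining these observations with the spread-$k$ property of $C$ should yield $d_{I(d+1)}(y,y')\ge\min\{d_{C'}(y,y'),k\}$ in every case; the delicate point is checking that the extra $+1$ from coordinate $d+1$ exactly offsets the single $d+1$-transition that a short arc may cross, and that longer arcs already force $d_{I(d)}(x,x')\ge k$. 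This is the same dimension-raising mechanism underlying the recursive constructions of Singleton and later authors, so it may alternatively be cited rather than reproved.
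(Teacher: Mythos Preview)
Your argument is correct and follows essentially the same idea as the paper: take a maximum $(d,k)$ code $C$, embed it in $I(d+1)$, and when $s\nmid |C|$ insert an even number of transitions in the new coordinate $d+1$, spaced at least $k$ apart along $T(C)$ (Lemma~\ref{lem_lb} guarantees enough room), to raise the length to a multiple of $s$. The paper is terser---it fixes a particular symmetric placement (splitting $T(C)$ into two halves of length $N/2$ and appending $d+1$ after each of the first $p=s-((N/2)\bmod s)$ length-$k$ blocks in each half, giving length $N+2p=2(q+1)s$) and simply asserts the resulting sequence is a $(d+1,k)$ circuit code---whereas you sketch the spread verification explicitly.
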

\begin{proof}
Let $C$ be a $(d,k)$ circuit code of maximal length $N=K(d,k)$.
If $N/2$ is divisible by $s$, we embed $C$ trivially in dimension $d+1$, proving the claim.
Otherwise, $N/2=qs+r$ where $q=\lfloor N/2s \rfloor$ and $1\le r\le s-1$.  Let $p=s-r$ and partition $T(C)$ into $T(C) = (\omega_1,\omega_2)$ where each $\omega_i$ is a segment of length $N/2$.
For $i=1,2$ we form $\omega_i'$ by appending the new transition element $d+1$ (corresponding to going from dimension $d$ to dimension $d+1$) to the end of the first $p$ segments of length $k$ of $\omega_i$ (the condition $K(d,k)>2(d-1)k$ ensures there are enough such segments).
Then $(\omega_1',\omega_2')$ defines a $(d+1,k)$ circuit code of length $N+2p=2(q+1)s$, thus proving the claim.
\end{proof}
\begin{cor}
\label{cor1}
Let $k\ge 2$ be even and let $m$ and $n$ be integers such that $(k+2)^2\le m\le n$.
Then $K(m+n+2,k)\ge (1/k)K(m,k-1)K(n,k)$.
\end{cor}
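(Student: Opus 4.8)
The plan is to prove this by a \emph{composition} (product‑type) construction: from a maximum $(n,k)$ circuit code and a maximum $(m,k-1)$ circuit code I will assemble a $(m+n+2,k)$ circuit code whose length is at least $\tfrac1k$ times the product of the two lengths. The dimension count splits as $m+n+2 = n$ (the coordinates of the $(n,k)$ code) $+\,(m+1)$ (the coordinates of the $(m,k-1)$ code, after enlarging it by one coordinate) $+\,1$ (a single ``control'' bit). The point of the extra coordinate adjoined to the second code is to invoke Lemma~\ref{lem_divis} and make its length divisible by $k$, which lets us chop its transition sequence into blocks of length exactly $k$; the control bit is what separates the pieces of the construction and, crucially, supplies the one unit of Hamming distance that is otherwise lost by allowing the second code to have spread only $k-1$ instead of $k$. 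This is also where the factor $\tfrac1k$ comes from: chopping into blocks of length $k$.

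In detail, let $C=(x_1,\dots,x_N)$ be a $(n,k)$ code with $N=K(n,k)$ on coordinates $\{1,\dots,n\}$. Since $m\ge(k+2)^2\ge((k-1)+2)^2$, Lemma~\ref{lem_divis} applied with spread $k-1$ (and, when $k=2$, directly, since $K(m,1)=2^m$ is already divisible by $k$) produces a $(m+1,k-1)$ code $D$ on coordinates $\{n+1,\dots,n+m+1\}$ whose length $M'$ is divisible by $k$ (indeed one may take $2k\mid M'$) and satisfies $M'\ge K(m,k-1)$; let coordinate $n+m+2$ be the control bit $c$. Partition $T(D)$ into $M'/k$ consecutive blocks of length $k$. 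The new transition sequence consists of $M'/k$ successive complete passes of $T(C)$; each pass is interrupted at one point by the gadget $(\,c,\ \text{one block of }T(D),\ c\,)$, consecutive passes are separated by a further toggle of $c$, and the position at which the $j$‑th pass is interrupted is shifted (by $\Theta(k)$) as $j$ grows. After all $M'/k$ passes the $C$‑coordinates have returned to $\vec 0$ (an integral number of full cycles), the $D$‑coordinates have completed exactly one cycle, and $c$ has been toggled an even number of times and returned; so we obtain a cycle in $I(m+n+2)$ of length $\tfrac{M'}{k}\bigl(N+k+O(1)\bigr)\ge \tfrac{M'}{k}N\ge \tfrac1k K(m,k-1)\,K(n,k)$.

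It then remains to verify that this cycle is simple and has spread $k$. Simplicity: the control bit $c$ tells apart the ``inserted $D$‑block'' vertices from the ``$C$‑pass'' vertices; distinct $C$‑passes are distinguished by the (frozen) value of their $D$‑coordinate; and shifting the interruption point keeps the $D$‑block vertices of different passes distinct. Spread: for two vertices $z,z'$ with cyclic distance $d$ in the new code one shows their Hamming distance is $\ge\min(d,k)$, by cases — both on the backbone of a single $C$‑pass (inherit from $C$'s spread $k$, the relevant $C$‑cyclic distances being $\le N$); both inside one $D$‑block (here $d\le k$, so the spread $k-1$ of $D$ suffices, with the control bit covering the case $d=k$); and the ``seam'' cases (one backbone vertex and one $D$‑block vertex, two vertices in different $D$‑blocks, or backbone vertices on different passes), where the contributions from the $C$‑coordinates (governed by spread $k$), the $D$‑coordinates (governed by spread $k-1$), and the $+1$ from $c$ are shown to sum to at least $k$ whenever $d\ge k$.

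I expect the seam cases of the spread verification to be the main obstacle, together with the interplay between them and simplicity. A naive schedule — interrupting every pass at the same place — either destroys simplicity or creates pairs of vertices at Hamming distance $1$ whose cyclic distance is $\approx N$; shifting the interruption point too aggressively destroys the inheritance of spread from $C$; and once the total shift exceeds $N$ the interruption points repeat, so one must also arrange the parity pattern of the control bit (which alternates from pass to pass) so that two $D$‑blocks sharing a $C$‑position get opposite $c$‑values. Choosing the shift schedule and the $c$‑toggle pattern so that simplicity and the spread bound hold simultaneously — so that at every seam the $+1$ from $c$, together with the displaced positions, exactly makes up for the ``$-1$'' in the spread of the second code — is the technical heart of the argument, and is precisely where both the ``$+2$'' in the dimension and the ``$\tfrac1k$'' in the length are spent.
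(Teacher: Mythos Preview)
The paper's proof of this corollary is two lines: it invokes Theorem~1 of Klee, which directly gives $K(m+n+2,k)\ge \frac{1}{k}\, K(m+1,k-1,k)\,K(n,k)$, and then applies Lemma~\ref{lem_divis} to obtain $K(m+1,k-1,k)\ge K(m,k-1)$. All of the combinatorial work you are attempting --- the product construction, the control bit, the spread verification at the seams --- is already packaged inside Klee's theorem and used as a black box.

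Your proposal carries out the same first step (invoking Lemma~\ref{lem_divis} to pass to dimension $m+1$ with length divisible by $k$) but then tries to rebuild the Klee product construction from scratch rather than cite it. That is a legitimate route in principle, but as written it is not a proof: you explicitly label the seam cases of the spread verification and the choice of shift schedule and control-bit pattern as ``the main obstacle'' and ``the technical heart of the argument'', and then leave them undone. In particular you never fix the shift schedule beyond ``$\Theta(k)$'', nor the precise $c$-toggle pattern (your description is consistent with either three or four toggles per pass), nor do you verify spread~$k$ for two backbone vertices lying in different passes --- the case where the frozen $D$-coordinates contribute only $k-1$ via the spread of $D$, and the missing $+1$ must come from the control bit in a way that depends simultaneously on the pass index and on the position relative to the (moving) interruption point. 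Until those choices are pinned down and the case analysis actually carried through, what you have is a plan for reproving Klee's theorem, not a proof of the corollary.
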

\begin{proof}
Theorem 1 of \cite{Klee} implies that $K(m+n+2,k)\ge (1/k)K(m+1,k-1,k)K(n,k)$.  
From Lemma \ref{lem_divis}, $K(m+1,k-1,k)\ge K(m,k-1)$.
\end{proof}

\section{Proof of Theorem \ref{thm1}}
\begin{prop}
\label{lem_add}
For $k$ even, $\log_2K(d-3,k)$ is superadditive (in $d$) up to a constant.
\end{prop}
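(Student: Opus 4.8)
The goal is to produce a constant $c=c(k)$ and a threshold $d_0=d_0(k)$ such that $f(m+n)\ge f(m)+f(n)-c$ whenever $m,n\ge d_0$, where $f(d):=\log_2 K(d-3,k)$; this is precisely what ``superadditive up to a constant'' should mean, and it is the Fekete-type hypothesis needed to force the limit in \eqref{nu_def} to exist. The whole argument will rest on Corollary \ref{cor1}, which already delivers a product lower bound for $K$ at a dimension equal to a sum of two dimensions plus an additive ``$+2$''; the only work is to align that ``$+2$'' with the ``$-3$'' shifts built into $f$, and to collapse the two different spreads $k-1$ and $k$ appearing in the corollary down to the single spread $k$.

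The two tools for this collapse are the monotonicities of $K$, both immediate from the definition: $K(d,k)$ is non-decreasing in $d$ (a $(d,k)$ circuit code embeds into $I(d+1)$ without using the new coordinate, cf.\ Lemma \ref{lem_divis}), and $K(d,k)$ is non-increasing in $k$ (a spread-$k$ code is in particular a spread-$(k-1)$ code). Combining, $K(d,k-1)\ge K(d',k)$ whenever $d\ge d'$.

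With these in hand I would argue as follows. Assume without loss of generality $m\le n$, and suppose $m\ge (k+2)^2+3$. Apply Corollary \ref{cor1} with its first parameter equal to $m-3$ and its second equal to $n-2$; its hypotheses $(k+2)^2\le m-3\le n-2$ are exactly what has been assumed. This gives
\[
K(m+n-3,\,k)\;=\;K\big((m-3)+(n-2)+2,\,k\big)\;\ge\;\frac{1}{k}\,K(m-3,\,k-1)\,K(n-2,\,k).
\]
Now apply spread-monotonicity to the first factor ($K(m-3,k-1)\ge K(m-3,k)$) and dimension-monotonicity to the second ($K(n-2,k)\ge K(n-3,k)$) to obtain $K(m+n-3,k)\ge\frac1k K(m-3,k)\,K(n-3,k)$. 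Taking logarithms yields $f(m+n)\ge f(m)+f(n)-\log_2 k$, so $c=\log_2 k$ and $d_0=(k+2)^2+3$ work. (If a statement literally valid for all $d$ is wanted, the finitely many smaller dimensions are absorbed into a larger constant; and the degenerate case $k=2$ is trivial on its own, since $\tfrac{3}{10}2^d\le K(d,2)\le 2^d$ forces $f(d)=d+O(1)$, and Corollary \ref{cor1} is not even in force there.)

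The analysis has essentially no analytic content — the one point requiring care is the bookkeeping of hypotheses, because Corollary \ref{cor1} internally invokes Lemma \ref{lem_divis} and Lemma \ref{lem_lb} at spread $k-1$, which is why one needs $k\ge 4$ (even) and the floor $(k+2)^2$ on the smaller dimension, and it is also what dictates the slightly asymmetric split $(m-3,\,n-2)$ rather than $(m-3,\,n-3)$: after matching the ``$+2$'' of the corollary there is exactly one unit of dimension to spare, and it is spent on the second factor via dimension-monotonicity. I expect that this hypothesis-chasing, and not any inequality, is the only place one could slip.
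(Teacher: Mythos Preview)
Your proof is correct and follows essentially the same route as the paper: apply the Klee product inequality (packaged here as Corollary~\ref{cor1}) with suitably shifted parameters, then collapse the $k-1$ factor to $k$ via spread-monotonicity, obtaining $f(m+n)\ge f(m)+f(n)-\log_2 k$. The only cosmetic difference is where the spare unit of dimension is absorbed---you spend it on the second argument via $K(n-2,k)\ge K(n-3,k)$, whereas the paper spends it on the first via Lemma~\ref{lem_divis}---and your separate handling of $k=2$ is a reasonable precaution.
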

\begin{proof}
For $n\ge  m\ge (k+2)^2+3$, we have
\begin{align}
K(m-3,k-1)K(n-3,k)& \le K(m-2,k-1,k)K(n-3,k) & \text{ by Lemma \ref{lem_divis}} \nonumber \\
& \le k K(m+n-3,k) & \text{ by Corollary \ref{cor1}} \nonumber
\end{align}
That is, $a_m+b_n\le a_{m+n}$, with
$a_n:=\log_2(K(n-3,k)/k)$, $b_n:=\log_2(K(n-3,k-1)/k)$
nondecreasing sequences satisfying $a_n\le b_n \le n$ by construction.
It follows that $a_m+a_n\le a_{m+n}$, hence $\{a_n\}$ is superadditive and $\log_2 K(d-3,k)$ is superadditive up to a constant.
\end{proof}

\begin{cor}
\label{cor_nu}
The limit (\ref{nu_def}) exists and $\nu(k-1)=\nu(k)$.
\end{cor}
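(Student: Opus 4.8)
The plan is to feed the superadditive sequence from Proposition \ref{lem_add} into Fekete's lemma. Assume $k$ is even, as in that proposition, and recall that $a_n=\log_2(K(n-3,k)/k)$ is nonnegative, satisfies $a_n\le n$, and is superadditive once $n\ge (k+2)^2+3$. Since $a_n/n$ is then bounded, Fekete's subadditivity lemma (in its superadditive form, applied to a sequence that is superadditive only beyond a fixed index --- one may pass to the subsequence $\{a_{jN}\}_{j\ge 1}$ for a large fixed $N$) yields that $\lim_{n\to\infty}a_n/n$ exists and equals $\sup_n a_n/n\le 1$. First I would check that the shift of the argument by $3$ and the subtracted constant $\log_2 k$ both wash out after division by $n$, so that $\lim_{d\to\infty}\log_2K(d,k)/d$ exists and equals $\lim_n a_n/n$; denote this value $\nu(k)$.

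Next I would pin down $\nu(k-1)$ by a squeeze. Specializing the chain of inequalities from the proof of Proposition \ref{lem_add} to $m=n$ gives $a_n+b_n\le a_{2n}$, where $b_n=\log_2(K(n-3,k-1)/k)$; combined with the construction inequality $a_n\le b_n$ this sandwiches $a_n\le b_n\le a_{2n}-a_n$. Dividing through by $n$ and letting $n\to\infty$, the left side tends to $\nu(k)$ and the right side to $2\nu(k)-\nu(k)=\nu(k)$, using $a_{2n}/n=2\,a_{2n}/(2n)\to 2\nu(k)$. Hence $b_n/n\to\nu(k)$, and undoing the normalization exactly as before shows that $\lim_{d\to\infty}\log_2K(d,k-1)/d$ exists and equals $\nu(k)$, i.e. $\nu(k-1)=\nu(k)$. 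Since every positive integer is even or the predecessor of an even one, this in particular gives the existence of $\nu$ for every spread.

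I do not expect a genuine obstacle here: the substance is already in Proposition \ref{lem_add} (hence in Corollary \ref{cor1} and Lemma \ref{lem_divis}). The two points that need care are the ones flagged above: invoking Fekete's lemma in the ``eventually superadditive'' form rather than the textbook version, and, in the squeeze, genuinely using \emph{both} bounds $a_n\le b_n$ and $b_n\le a_{2n}-a_n$ --- the upper bound by itself only gives $\limsup_n b_n/n\le\nu(k)$, and it is the elementary monotonicity $K(d,k-1)\ge K(d,k)$, equivalently $a_n\le b_n$, that supplies the matching $\liminf$ and closes the argument.
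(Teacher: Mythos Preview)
Your proposal is correct and follows essentially the same route as the paper: apply Fekete's lemma to the superadditive sequence $a_n$ from Proposition~\ref{lem_add} to get existence of $\nu(k)$, then use the sandwich $a_n\le b_n\le a_{2n}-a_n$ to force $b_n/n\to\nu(k)$ and hence $\nu(k-1)=\nu(k)$. You are simply more explicit than the paper about the ``eventually superadditive'' caveat and about the shift-and-constant washing out, but the substance is identical.
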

\begin{proof}

Let $a_n$ and $b_n$ be as in Proposition \ref{lem_add}.
It follows from Fekete's Lemma (\cite{Fekete}, \cite{Steele}) for superadditive functions
that $\lim_n\frac{a_n}{n}=\limsup_n\frac{a_n}{n}$, so $(\ref{nu_def})$ exists for even $k$.
Furthermore, $a_n\le b_n \le a_{2n}-a_n$ implies $\lim_n \frac{b_n}{n} = \lim_n \frac{a_n}{n}$, hence $\nu(k-1)=\nu(k)$. 
\end{proof}

\begin{prop}
\label{prop_pn}
The limit $(\ref{nu_def})$ equals $1$ on a subsequence $d_i\to+\infty$.
\end{prop}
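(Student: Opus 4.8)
\textit{Proof strategy.} Since every $(d,k)$ circuit code is a simple cycle in $I(d)$ we have the trivial bound $K(d,k)\le 2^d$, so $\log_2 K(d,k)/d\le 1$ for all $d$ and the entire content is a lower bound along a suitable subsequence. The plan is to exhibit, for each large $n$, a $(d_n,k)$ circuit code of length at least $2^{2^n-O_k(n)}$ living in a dimension $d_n=2^n+O_k(n)$ — these are the ``especially long codes in dimensions $d=2^n+O(n)$'' referred to in the introduction. Granting such codes, for the subsequence $d_i:=d_{n_i}$ with $n_i\to\infty$ one gets
\[
\frac{\log_2 K(d_i,k)}{d_i}\ \ge\ \frac{2^{n_i}-O_k(n_i)}{2^{n_i}+O_k(n_i)}\ =\ 1-O_k\!\Big(\frac{\log d_i}{d_i}\Big)\ \longrightarrow\ 1\quad(i\to\infty),
\]
which is exactly the assertion that $(\ref{nu_def})$ equals $1$ on a subsequence; together with Corollary \ref{cor_nu} (the limit exists) this also yields $\nu(k)=1$.

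\textit{The construction.} I would obtain these codes by upgrading a Gray code. Fix $k$, set $N=2^n$, and start from a (totally balanced) binary Gray code $\Gamma$ on the coordinate set $[N]$: a spread-$1$ circuit code of length $2^N$ whose transition sequence has the property that, since $N$ is large relative to $k$, any $N$ cyclically consecutive entries are distinct, so \emph{every} segment $\omega$ of $T(\Gamma)$ with $|\omega|\le N$ automatically has weight $\ge\min(|\omega|,k)$ (here the \emph{weight} of a segment is the number of coordinates occurring in it an odd number of times, i.e.\ the Hamming distance between its endpoints; recall spread $\ge k$ is equivalent to every segment $\omega$ with $|\omega|\le(\text{length})/2$ having weight $\ge\min(|\omega|,k)$). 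Thus only ``long-range'' segments can violate the spread condition, and these would be repaired hierarchically: insert short \emph{gadget words} on $O_k(\log N)$ fresh coordinates at positions spaced $\Theta(N)$ apart (a first layer), so that every segment straddling a gadget picks up auxiliary weight $\ge k$ — extending the safe window length by a factor $\approx N$ — then repeat on the enlarged sequence with a new layer of $O_k(\log N)$ further coordinates, and so on for $O(\log 2^N)=O(N)$... more precisely $O(\log N)$ doublings suffice to reach window length $2^N/2$ once one allows the spacing to grow, giving $O_k(\log N)$ coordinates per layer and $O_k(\log^2 N)$ coordinates in all, i.e.\ $d_n=N+O_k(\log^2 N)$ — comfortably $2^n+O_k(n)$. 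Each gadget layer is arranged so that (i) no auxiliary coordinate recurs locally, (ii) the partial XOR-sums of the gadget words of that layer form a minimum-distance-$\ge k$ code in the layer's coordinate space (this is where a structured, BCH-type, choice is used and where ``$N$ a power of $2$'' is convenient), and (iii) the gadget words of each layer sum to zero, so the spliced transition sequence is still a simple cycle; a short case analysis on the length of a segment (inside a single base block; straddling one gadget of some layer; containing a full gadget of some layer) then certifies spread $\ge k$, and because each layer's spacing exceeds its gadget length the total length stays $2^{N+o(N)}$.

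\textit{Main obstacle and wrap-up.} The crux — and the step I expect to be hardest — is the design in (i)--(iii) together with the bookkeeping that makes the layered repair close up: one must show that the $j$-th layer can be built using only $O_k(\log N)$ coordinates and a structured minimum-distance-$k$ code, that splicing in a layer creates no new short-range violations near the splice points, and that the safe window length really does grow geometrically from $N$ up to $2^{N-1}$ across $O(\log N)$ layers. This is essentially a coding-theoretic construction transverse to the Gray traversal, and it is what escapes the circularity that the composition results of Section~2 alone cannot: iterating Corollary \ref{cor1} preserves but never improves the exponential rate, so an external near-optimal construction such as this one is needed. Once it is in place, $\log_2 K(d_n,k)\ge N-o(N)=d_n-O_k(\log^2 d_n)$ along the infinite family $\{d_n\}_{n}$, which proves the proposition; combined with Corollary \ref{cor_nu} it gives $\nu(k)=1$, and the gap between this subsequence and arbitrary $d$ is afterwards bridged, with the same order of error, by the superadditivity of Proposition \ref{lem_add}.
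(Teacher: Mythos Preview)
Your high-level strategy is exactly the paper's: exhibit, along dimensions $d_i=2^i+O_k(i)$, codes of length $2^{d_i-O_k(\log d_i)}$. The paper, however, does not build such codes from scratch. It simply invokes the Preparata--Nievergelt construction \cite{PN}: for $d_i=2^i+2i+5k-4$ there is a \emph{difference-preserving path} $P$ (an open walk in $I(\hat d_i)$, $\hat d_i=d_i-k$, satisfying the spread-$k$ inequality) of length $N\ge 2^{\hat d_i-(2+k)i-c}$. The only additional step is the one-line observation that concatenating $T(P)$, the $k$ fresh symbols $\hat d_i+1,\dots,\hat d_i+k$, another copy of $T(P)$, and the same $k$ symbols again yields the transition sequence of a genuine $(d_i,k)$ circuit code of length $2(N+k)$; the spread condition across the two seams is immediate because those $k$ new symbols are distinct from everything in $T(P)$. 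That is the whole proof.

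Your attempt to reinvent \cite{PN} contains a concrete error and a genuine gap. The error: no Hamiltonian Gray code on $[N]$ can have ``any $N$ cyclically consecutive transitions distinct''. If every length-$N$ window were a permutation of $[N]$, comparing adjacent windows forces $\tau_{j+N}=\tau_j$ for all $j$, so the transition sequence is $N$-periodic and the resulting simple cycle has length $2N$, not $2^N$. Even the weaker property you actually need for the short-range part --- that any $k$ (or $2k-1$) consecutive transitions be distinct --- is not a consequence of total balance and would itself have to be constructed. The gap: you correctly flag the layered-gadget construction (your items (i)--(iii)) as the crux and leave it unverified; but this \emph{is} essentially the content of \cite{PN} (which, as you guessed, uses BCH codes for the minimum-distance ingredient), and it cannot be waved through. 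As written, your proposal is a plausible outline for reproving \cite{PN}, not a proof of the proposition; the paper sidesteps all of this by citing the reference and adding only the path-to-circuit extension.
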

\begin{proof}
We rely on the following construction of Preparata and Nievergelt \cite{PN}: for $i\ge 1$,
$d_i=2^i+2i+5k-4$, there exists a distinguished difference-preserving code $P$
(an open path in $I(d)$ satisfying the \emph{spread} $k$ condition $d_{I(d)}(x,y) \ge \min\{d_P(x,y),k\} \ \forall x,y\in P$) 
in dimension $\hat{d}_i=d_i-k$, of length $N\ge 2^{\hat{d}_i-(2+k)i-c}$, with $c$ a fixed constant.
We extend $P$ to a \emph{circuit code} as follows:
if $T(P)$ is the transition sequence of $P$, $(T(P),\hat{d}_i+1,\ldots,\hat{d}_i+k,T(P),\hat{d}_i+1,\ldots,\hat{d}_i+k)$ 
is the transition sequence of a $(d_i,k)$ circuit code of length $2(N+k)$.  
This implies  $K(d_i,k)\ge 2(N+k)$, hence
$\lim_{i\to\infty}\log_2K(d_i,k)/d_i=1$.  
\end{proof}

\begin{proof}[Proof of Theorem \ref{thm1}]
Corollary \ref{cor_nu} and Proposition \ref{prop_pn} yield $\nu(k)=1$, $\forall k\ge 1$. 
To obtain the rate of convergence we exploit two particular features of the sequence $d_i=2^i+2i+5k-1$:
1) any integer $d\ge 1$ can be written as ${d=p+\sum_{i=1}^H\epsilon_i d_i}$, 
with $0 \le p<d_1$ and $\epsilon_i\in\{0,1\}$
(by induction);
2) ${a_{d_i}=d_i + O(\log d_i)}$ with $a_n$ as in Proposition \ref{lem_add} (by Proposition \ref{prop_pn}).
If $k$ is even, Proposition \ref{lem_add} yields
$a_d\ge \sum_{i=1}^H \epsilon_i a_{d_i} + O(1) = \sum_{i=1}^H \epsilon_i (d_i +O(\log d_i)) + O(1)$.
This in turn equals $d+O(\sum_{i=1}^H \log d_i)+O(1)=d+O(\log^2d)$, implying $\log_2 K(d,k)$ is of this form as well. 
For $k$ odd, the claim follows from $K(d,k) \ge K(d,k+1)$.
\end{proof}

\section{Concluding Remarks}
In this note we presented a proof that a maximal $(d,k)$ circuit code has size $2^{d+O_k(\log^2d)}$. 
In \cite{PN} the authors construct difference-preserving codes of this same asymptotic order.  However, that construction is based on extending a binary BCH code in dimension $2^i-1$ having minimum distance $2k+1$ and length $2^{2^i-1-ik}$ to a $(d,k)$ difference-preserving code in dimension $d=2^i+O(i)$, and thus seems limited to a sparse subsequence of dimensions. 
Here we prove this result for circuit codes and the full sequence 
$d\to\infty$ by observing that available inequalities on maximizing circuit codes 
translate into the superadditivity (up to a constant) of the sequence $\log_2K(d-3,k)$, when $k$ is even.

\footnotesize
\bibliographystyle{plain}
\bibliography{CircuitCodesChar}
\normalsize

\end{document}